\newtheoremstyle{mlad} 
    {\topsep}                    
    {\topsep}                    
    {}                   
    {}                           
    {\bfseries}                   
    {.}                          
    {.5em}                       
    {}  
\theoremstyle{plain}
\newtheorem{thm}{Theorem}[section]
\newtheorem{prop}[thm]{Proposition}
\newtheorem{cor}[thm]{Corollary}
\newtheorem{lemma}[thm]{Lemma}
\theoremstyle{definition}
\newtheorem{para}[thm]{}
\newtheorem{defn}[thm]{Definition}
\theoremstyle{remark}
\newtheorem{remark}[thm]{Aside}
\newcommand{\proofof}[1]{\end{#1}\begin{proof}}
\renewcommand\section{\@startsection {section}{1}{\z@}%
  {-3.5ex \@plus -1ex \@minus -.2ex}{2.3ex \@plus.2ex}%
  {\normalfont\large\bfseries}}
\renewcommand\subsection{\@startsection{subsection}{2}{\z@}%
  {-3.25ex\@plus -1ex \@minus -.2ex}{1.5ex \@plus .2ex}%
  {\normalfont\bfseries}}
\newcommand{\sh}[1]{\mathcal{#1}}
\newcommand{\N}{{\mathbb N}}
\newcommand{\Z}{{\mathbb Z}}
\newcommand{\R}{{\mathbb R}}
\newcommand{\iden}{\mathrm{id}}
\DeclareMathAlphabet{\mathrmsl}{OT1}{cmr}{m}{sl}
\newcommand{\rssymb}[2]{\newcommand{#1}{\mathrmsl{#2}} }
\newcommand{\oper}[3][n]{\newcommand{#2}{\mathop{\mathrm{#3}}%
\ifx n#1\nolimits\else\limits\fi} }
\newcommand{\rsoper}[3][n]{\newcommand{#2}{\mathop{\mathrmsl{#3}}%
\ifx n#1\nolimits\else\limits\fi} }
\renewcommand{\bf}[1]{\mathbf{#1}}
\renewcommand{\rm}[1]{\mathrm{#1}}
\oper\Ad{Ad}
\oper\ad{ad}
\oper\val{val}
\oper\coker{coker}
\oper\mult{mult}
\oper\Iso{Iso}
\oper\End{End}
\oper\Aut{Aut}
\oper\Sub{Sub}
\oper\Alt{Alt}
\oper\Ext{Ext}
\oper\Pic {Pic}
\oper\Sym{Sym}
\oper\Spec{Spec}
\oper\Spf{Spf}
\oper\Sp{Sp}
\oper\Spa{Spa}
\oper\Proj{Proj}
\rsoper\divg{div}
\rsoper{\sym}{sym}
\rsoper{\alt}{alt}
\rsoper\trace{tr}
\rssymb\id{id}
\newcommand{\thismonth}{\ifcase\month\or
  January\or February\or March\or April\or May\or June\or
  July\or August\or September\or October\or November\or December\fi
  \space\number\year}
\newcommand{\PSh}{\mathrm{PSh}}
\newcommand{\Fun}{\mathrm{Fun}}
\newcommand{\Map}{\mathrm{Map}}
\newcommand{\Cat}{\mathbf{Cat}}
\newcommand{\Op}{\mathbf{Op}}
\newcommand{\Cart}{\mathrm{Cart}}
\newcommand{\op}{\mathrm{op}}
\title{The operad that co-represents enrichment}
\author{Andrew W. Macpherson}
\begin{document}

\maketitle

\abstract{I show that the theories of enrichment in a monoidal $\infty$-category defined by Hinich and by Gepner-Haugseng agree, and that the identification is unique. Among other things, this makes the Yoneda lemma available in the former model.}

\tableofcontents

\section{Introduction}

The notion of an enrichment of a category $\rm C$ in a monoidal category $(\sh V,\otimes)$ as a bifunctor
\[ \rm C_{\sh V}(-,-):\rm C^\op\times\rm C\rightarrow\sh V, \]
together with a composition law satisfying an associativity constraint, goes back almost as far as category theory itself.\footnote{For a potted history, see the introduction to \cite{Kelly}.} Trying to transplant this notion into higher category theory, one encounters the same difficulties controlling `coherent associativity' as one does when defining algebra objects --- that is, one-object enriched categories.

Recently, two approaches have appeared that provide a framework of $\sh V$-enrichments for any monoidal $\infty$-category $(\sh V,\otimes)$: the \emph{categorical algebras in $\sh V$} of \cite{Gepner_Haugseng} and the \emph{$\sh V$-enriched precategories} of \cite{Hin}. As in the better-known case of coherently associative algebras, the trick is in selecting a gadget that indexes operations --- in other words, a \emph{co-representing} object. In this case, the relevant object is a planar $\infty$-operad.

It is claimed in \cite{Hin} that its corepresenting operad $\mathtt{Ass}_X$ is `the same' as a simplicial multicategory $\sh O_X$ defined in \cite{Gepner_Haugseng}, and therefore that the attendant theories of $\sh V$-enrichments agree. However, although the author suggests the premise for the comparison, no complete justification of the claim appears. This note fills that gap: we show that:
\begin{itemize}\item The Gepner-Haugseng theory of categorical algebras is naturally (in $\sh V$) equivalent to Hinich's $\sh V$-enriched precategories; \item that the equivalence is unique --- the theory has no natural autoequivalences.\end{itemize}
In the rest of this introduction, I sketch out the broad strokes of the argument.

\begin{thm}\label{one}There is a unique equivalence, universal in $(\sh V,\otimes)$, between the Gepner-Haugseng category of categorical algebras in $\sh V$ and the Hinich category of $\sh V$-enriched precategories.\end{thm}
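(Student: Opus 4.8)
The plan is to treat both theories as \emph{co-represented} by a planar $\infty$-operad and thereby reduce the theorem to a comparison of the two co-representing objects together with a rigidity statement. Fix a space $X$ of objects. By Hinich's definition, the $\sh V$-enriched precategories on $X$ are the operad maps out of $\mathtt{Ass}_X$ into the monoidal $\infty$-category $(\sh V,\otimes)$ regarded as a planar operad $\sh V^\otimes$, i.e.\ $\Map(\mathtt{Ass}_X,\sh V^\otimes)$. The first step is to display the Gepner--Haugseng categorical algebras in the same light: I would apply a suitable (non-symmetric) operadic nerve to the simplicial multicategory $\sh O_X$ to obtain a planar $\infty$-operad $N(\sh O_X)$, and check that categorical algebras are recovered as $\Map(N(\sh O_X),\sh V^\otimes)$, functorially in $\sh V$. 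Once both theories appear as mapping objects out of a single operad, the Yoneda lemma for $\infty$-operads reduces everything to understanding maps between $\mathtt{Ass}_X$ and $N(\sh O_X)$.

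The technical heart is then to produce an equivalence $N(\sh O_X)\simeq\mathtt{Ass}_X$ of planar $\infty$-operads, natural in $X$. Both operads have colors indexed by ordered pairs in $X$, the hom-slots, and both encode composition along chains $x_0,\dots,x_n$; so I would write down an explicit comparison functor matching colors and spaces of multimorphisms, governed by the combinatorics of finite linear orders. I expect this comparison to be the main obstacle: the two objects live in genuinely different frameworks --- one a strict simplicial multicategory, the other a $\Delta$-based planar operad --- so the real work is in matching their multimorphism spaces coherently and verifying that the candidate functor is essentially surjective and fully faithful at the operadic level, not merely on underlying categories. Naturality in $X$ must be maintained throughout, which forces the comparison to be assembled from the universal $\Delta$-indexed data rather than chosen slot by slot.

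Granting the operad equivalence, existence of the universal equivalence of theories follows formally: the equivalence $N(\sh O_X)\simeq\mathtt{Ass}_X$ induces, by co-representability, a natural identification of the functors $(\sh V,\otimes)\mapsto\Map(-,\sh V^\otimes)$, and assembling over varying $X$ --- together with the change-of-objects functoriality --- upgrades this to an equivalence of the full theories, manifestly natural, hence universal, in $(\sh V,\otimes)$.

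For uniqueness I would argue that natural autoequivalences of the theory correspond, again by Yoneda and co-representability, to self-equivalences of the co-representing operad, so it suffices to show the relevant automorphism space is contractible. This is a rigidity property of the associative operad: its only nontrivial symmetry is order-reversal, which on enriched categories implements $\rm C\mapsto\rm C^\op$ while simultaneously reversing the monoidal structure, $\sh V\mapsto\sh V^{\mathrm{rev}}$. Because it does not fix $(\sh V,\otimes)$, this symmetry is not a natural autoequivalence in the sense demanded by the theorem; once it is excluded the residual automorphism space is trivial. Combining contractibility of the space of equivalences with the established existence then yields that the universal equivalence is unique.
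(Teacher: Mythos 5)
Your outline --- reduce both theories to mapping categories out of corepresenting planar operads, compare the corepresenting objects, then prove a rigidity statement --- is the same skeleton as the paper's argument, but both load-bearing steps are left as acknowledged or unacknowledged gaps, and in each case the paper's route is materially different from the one you sketch. For the comparison, you propose to build a direct equivalence of operads $N(\sh O_X)\simeq\mathtt{Ass}_X$ by matching colours and multimorphism spaces, and you correctly flag this as the main obstacle; but you have no independent handle on $\mathtt{Ass}_X$ other than its defining property $\Fun_\mathtt{Ass}(\sigma,\mathtt{Ass}_X)\cong\Fun(\sh F(\sigma),X)$, i.e.\ $X\mapsto\mathtt{Ass}_X$ is \emph{defined} as a right adjoint to (a Kan extension of) Hinich's $\sh F$. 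The efficient move, which the paper makes, is therefore not to construct an operad map at all but to exhibit $\sh O$ as another right adjoint to $\hat{\sh F}$: one checks a \emph{formal adjunction} $\sh F\dashv\sh O$ on the generating subcategory $\Delta_\mathtt{Ass}$ (Prop.\ \ref{TECH_ADJOINT}, an explicit but finite combinatorial identification of $\Fun_\mathtt{Ass}(\sigma,\sh O_X)$ with $\Fun(\sh F(\sigma),X)$ for $0$- and $1$-simplices, plus compatibility with faces, degeneracies and inner faces), and then promotes it to a genuine adjunction of $\infty$-categories via a Quillen adjunction $s\Cat_\mathtt{Ass}\rightleftarrows s\Cat$ (Prop.\ \ref{QUILLEN_ADJOINT_SCAT}, Cor.\ \ref{QUILLEN_MAIN}), which is also where the mismatch of frameworks (strict simplicial multicategory versus $\infty$-operad) is absorbed. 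Uniqueness of adjoints then gives the equivalence for free; your plan, as stated, would still have to reprove exactly this adjunction in order to produce any map into $\mathtt{Ass}_X$ at all.

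The uniqueness step has a more serious defect. You argue that the only symmetry is order-reversal of the associative operad and that ``once it is excluded the residual automorphism space is trivial'' --- but that residual triviality is precisely the content that needs proof, and you have also conflated two different automorphism groups. Order-reversal is an automorphism of the base $\mathtt{Ass}=\Delta^\op$; the group controlling uniqueness of the identification is $\Aut$ of the functor $X\mapsto\mathtt{Ass}_X$ \emph{over} $\mathtt{Ass}$, equivalently (by Lemma \ref{CAT_ENRICH_ADJOINT_LEMMA} and uniqueness of adjoints) $\Aut(\sh F)$. That this group is trivial is Prop.\ \ref{UNIQUE_PROP}, proved by a concrete analysis of the graphs $\sh F(\sigma)$: naturality under vertex inclusions forces any $\phi:\Aut(\sh F)$ to preserve the decomposition into segments and the pairs $\{\bf x_i,\bf y_i\}$, and the diagram attached to the inner face $[2]\rightarrow[1]$ kills the remaining potential swap. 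Nothing in your proposal substitutes for this computation, so the uniqueness claim is unsupported as written.
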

\begin{proof}The corepresenting planar operads in the two cases are as follows (for more details, see \S\ref{PRELIM}):
\begin{itemize}\item In \cite[\S4.2]{Gepner_Haugseng} the object is denoted $L_\rm{gen}\Delta^\op_X$; we will use the fact \cite[Cor.\ 4.2.8]{Gepner_Haugseng} that it is presented by the simplicial multicategory $\sh O_X$.
\item Hinich's $\mathtt{Ass}_X$ is defined \cite[\S3.2]{Hin} by specifying its spaces of simplices
\begin{align} \sigma:\Delta_\mathtt{Ass},\ X:\Cat \qquad \Fun_\mathtt{Ass}(\sigma,\mathtt{Ass}_X) \cong \Fun(\sh F(\sigma),X) \end{align}
in terms of an explicit functor $\sh F:\Delta_\mathtt{Ass}\rightarrow\Cat$ with domain the category of simplices of the associative operad.
\end{itemize}
In light of the definition of $\mathtt{Ass}_X$, it is enough to exhibit the $\infty$-functor associated to $\sh O$ as a right adjoint to a left Kan extension of $\sh F$:
\begin{align} \hat{\sh F}:\Cat_\mathtt{Ass}\rightleftarrows \Cat:\sh O.\end{align} This follows from proposition \ref{TECH_ADJOINT} and corollary \ref{QUILLEN_MAIN}.

By proposition \ref{UNIQUE_PROP}, $\Aut(\sh F)$ is trivial, whence the adjunction data is unique.
\end{proof}

\paragraph{Perspective}The uniqueness statement in theorem \ref{one} echoes the main results of \cite{BS-P} concerning $(\infty,n)$-categories --- in fact, it is even more satisfying, because it is not required to fix the inclusion of a generating subcategory to `orient' the theory. We have not proved that the uniqueness statement applies also to the subcategory of \emph{complete} objects --- that is, to enriched category theory proper --- but it seems likely that it holds without modification.

\

\noindent Via the approach to defining $(\infty,n)$-categories by iterated enrichment, one can directly compare these results: see \cite[\S7]{Rune_rectification} for the Gepner-Haugseng model and \cite[\S5]{Hin} for Hinich. Does the diagram
\[\xymatrix{\rm{Alg_{cat}^{GH}}(n\Cat)\ar@{<->}[dr]|{\text{\cite[Thm.\ 7.5]{Rune_rectification}}}  \ar@{<->}[rr]^-{\text{Theorem \ref{one}}} && \rm{Alg}(\rm{Quiv}_-(n\Cat))\ar@{<->}[dl]|-{\text{\cite[Cor.\ 5.6.1]{Hin}}} \\
& \rm{Seg}(n\Cat)
}\]
commute? Here the authors' approaches diverge significantly, and it would be interesting to obtain an isomorphism between these two functors.

\

\noindent The uniqueness statement becomes false if we restrict to $\sh V$ \emph{symmetric} monoidal: there is a non-trivial symmetry given by formation of the opposite category. It seems likely that this is the entire symmetry group; however, a proof of this statement does not seem to be immediately accessible to the methods of this paper.

\paragraph{Bordism description of $\mathtt{Ass}_X$}
The definition of $\sh O_X$ makes use of simplicial category model in an essential way; meanwhile, although $\sh F$ can be defined without reference to a set-theoretic model, its definition uses laborious explicit combinatorics. Only $L_\rm{gen}\Delta_X^\op$ has a truly universal feel, though this too is unsatisfactory as it only gives the correct object when $X$ is a space. Is there a holistic approach to constructing the adjunction $\sh F\dashv \sh O$?

Here is a sketch of how $\mathtt{Ass}_X$ may be defined using bordism theory. First, we recall that the associative operad $\mathtt{Ass}=\Delta^\op$ has a realisation as a bordism category in which:
\begin{itemize}
\item objects are finite disjoint unions of embedded intervals in the line $\R_x$;
\item morphisms from $X_0$ to $X_1$ are surfaces $\Sigma$ (with corners) embedded in the plane $\R_x\times[0,1]_t$, transverse at $\{0,1\}$, with identifications $\partial_i\Sigma:=\Sigma\cap\R_x\times\{i\}\cong X_i$. The surfaces must be simply-connected and $\pi_0(\partial_1\Sigma)\rightarrow\pi_0(\Sigma)$ bijective.
\end{itemize}
As usual, composition is defined by glueing surfaces at marked ends. We intro

For a given such surface $\Sigma$, let us call \emph{horizontal} the part of the boundary not contained in $\R_x\times\{0,1\}$. Then $\mathtt{Ass}_X$ will be the operad consisting of 1-manifolds as above whose boundary points are labelled with objects of $X$, and whose morphisms are embedded surfaces with horizontal boundary marked with morphisms of $X$. Its structural morphism $\mathtt{Ass}_X\rightarrow\mathtt{Ass}$ obtained by forgetting the labelling. 

A look at the pictures in \cite{Hin}, or in \S\ref{TECH} of this paper, will confirm the equivalence of this description. A full development would be, I think, best left as another story for another day.

\paragraph{Outline of the paper}In \S\ref{PRELIM} we go over general conventions and review the relevant material from \cite{Gepner_Haugseng,Hin}. In \S\ref{QUILL} we make some reductions to the case of 1-categories; this section addresses the matter of extensions of adjunctions, and model-categorical issues (especially those concerning $s\Cat_\mathtt{Ass}$). In \S\ref{TECH} we carry out the main argument --- this is conceptually straightforward, but tedious in praxis. Finally, \S\ref{UNIQUE} addresses the symmetries of $\sh F$.

\paragraph{Acknowledgements}I thank David Gepner and Rune Haugseng and Aaron Mazel-Gee for illuminating conversations on the subject; in particular, I thank Rune for drawing my attention to the work of V.\ Hinich.

\section{Preliminaries}\label{PRELIM}

\begin{para}[Category theory]This paper is based on the $(\infty,1)$-category theory developed in \cite{HTT,HA}. Hence categories, by default, are $(\infty,1)$-categories, while classical $(1,1)$-categories whose mapping objects are sets are said to be \emph{1-truncated} categories. The $(\infty,1)$-category of $(\infty,1)$-categories is denoted $\Cat$; the full $(2,1)$-subcategory of $(1,1)$-categories is denoted $\Cat^{\leq1}$. 

The category of monoidal categories is denoted $\Cat^\otimes$; the category of (planar) operads is $\Op$.

Since one of the objects being compared is defined as a simplicial category, it was not possible to entirely avoid model category techniques. The methods we actually use are quite restricted:
\begin{itemize}\item The notion of Quillen adjunction and the fact, proved in \cite{MGq}, that it induces an adjunction between the associated $(\infty,1)$-categories.
\item The Bergner model structure on the $(1,1)$-category $s\Cat$ of simplicially enriched categories.\footnote{Nowadays, it is also possible to consider this as a model $(2,1)$-category.} We also need some facts about the slice model structure; see \S\ref{QUILL}.
\end{itemize}
\end{para}

\begin{para}[Membership declaration]Objects of categories (either variables or constants) are declared with a \emph{colon}, i.e.\ $a:A$ states that $a$ is an object of $A$ (hence is equivalent to the usual notation $a\in A$). As usual, we may also write $f:A\rightarrow B$ or $g:C\cong D$ to declare a function or isomorphism; the category in which this morphism or isomorphism lives, if unclear, is underset.
\end{para}

\begin{para}[Approaches to enriched categories]There have been various attempts to get enriched category theory working using model categories. The earliest attempts used classical enrichment compatible with a model structure; this approach continues to suffer from the disadvantages encountered in the special case of simplicially enriched categories, and it is impractical for addressing $(\infty,n)$-category theory. More recently, a more general notion of `weak' enrichment \cite{Simpson}. For an overview, see the introduction to \cite{Gepner_Haugseng}.

The notion of a $\sh V$-enriched category, for $(\sh V,\otimes)$ a general monoidal $\infty$-category, was introduced in \cite{Gepner_Haugseng}. The more flexible language of \cite{Hin}, although it appeared later, was apparently developed concurrently. It is worth mentioning that Hinich proves a Yoneda lemma \cite[Cor.\ 6.2.7]{Hin}. 

In both cases the definition of the 1-category $\sh V\Cat$ of $\sh V$-enriched categories proceeds in two stages:
\begin{enumerate}\item First we define a notion which in this paper we will call an \emph{algebroid} in $\sh V$ over a space $X$, encoding the $\sh V$-valued Hom-bifunctor $X\times X\rightarrow\sh V$ with its associative composition law. (These are the objects called `categorical algebras' in \cite{Gepner_Haugseng} and `enriched precategory' in \cite{Hin}.)
\item Second, localise to a full subcategory of \emph{complete} (or \emph{univalent}) algebroids. These are those algebroids for which the underlying space $X$ \emph{classifies objects} \cite[\S5.2]{Gepner_Haugseng}.\end{enumerate}
In this paper, we will not go into details on the localisation stage.
\end{para}

\begin{para}[Corepresenting algebroids]\label{PRELIM_ALGEBROID}To a monoidal category $\sh V$ and space $X$ each of the references \cite{Gepner_Haugseng, Hin} defines the category of algebroids in $\sh V$ over $X$ by constructing a corepresenting object.
\begin{itemize}\item To $X$ \cite{Gepner_Haugseng} functorially attaches a certain generalised planar operad $\Delta_X^\op\rightarrow\Delta^\op$ \cite[\S4.1]{Gepner_Haugseng}, which will corepresent algebroids with space of objects $X$. 

The authors then define \emph{categorical algebras} (at least for $X$ a space):
\[ \rm{Alg_{cat}}(\sh V)=\rm{Alg^{GH}_{cat}}(\sh V):=\bf{Op}^\rm{gen}_\rm{planar}(\Delta^\op_X,\sh V^\otimes)\]
as a category of morphisms between generalised planar operads. 

Since the target is a monoidal category, it makes no difference if we replace $\Delta_X^\op$ with its nearest (non-generalised) operad quotient $L_\rm{gen}\Delta^\op_X$. When $X$ is presented as a simplicial groupoid, this is modelled by a simplicial multicategory $\sh O_X$ \cite[\S4.2]{Gepner_Haugseng}.

\item The associative operad is denoted $\mathtt{Ass}(=\Delta^\op)$. Hinich associates to $X$ the planar operad $\mathtt{Ass}_X$ \cite[(3.2.8)]{Hin}. The category of $\sh V$-enriched precategories is then defined as the category of operad morphisms $\bf{Op}(\mathtt{Ass}_X,\sh V^\otimes)$.
\end{itemize}
It is reasonable to regard $\mathtt{Ass}_X$ as \emph{always} corepresenting algebroids over $X$, while $\Delta_X^\op$ is correct only when $X$ is a space (some of the arrows end up pointing the wrong way in general).

Being a mapping category, as $X$ and $\sh V$ varies, it defines a bifunctor \cite[(3.5.3)]{Hin}
\[ \rm{Algbrd}:1\Cat^\op \times \Cat^\otimes \rightarrow 1\Cat. \] 
Fixing $\sh V:\Cat^\otimes$ and integrating over $1\Cat$ yields a Cartesian fibration
\[ \mathrm{Algbrd}(\sh V) \rightarrow 1\Cat \]
whose total space is by definition the category of all algebroids in $\sh V$.
\end{para}

\begin{remark}[A clarification]A couple of remarks are required to fully equate the language of \cite{Hin} with the form presented in \ref{PRELIM_ALGEBROID}.
\begin{itemize}\item The paper is written in a generalised setting of operads over a `strong approximation' to a (symmetric) operad \cite[(2.5.2)]{Hin}. In this language, $\mathtt{Ass}=\Delta^\op$ is a strong approximation to the associative operad, and $\bf{Op}_\mathtt{Ass}$ is (definitionally) the same as the category of non-symmetric operads appearing in \cite[\S3]{Gepner_Haugseng}.

\item Rather than defining them directly as a mapping category, Hinich actually defines $\sh V$-enriched precategories as algebras in the `Day convolution' internal mapping operad 
\[\rm{Quiv}_X(\sh V) = \rm{Funop}(\mathtt{Ass}_X,\sh V^\otimes)\]
from $\mathtt{Ass}_X$ to $(\sh V,\otimes)$. This object is defined for so-called \emph{flat} operads, a class which includes $\mathtt{Ass}_X$ \cite[\S3.3]{Hin}. By \cite[Cor.\ 2.6.5]{Hin}, algebras in this category compute operad morphisms, so this agrees with the formula written here.\end{itemize}\end{remark}

\section{A Quillen adjunction}\label{QUILL}

There are a couple of technical matters to address before the main statement \ref{QUILLEN_ADJOINT_SCAT} can be formulated:
\begin{itemize}\item Since $\sh F$ is defined only on $\Delta_\mathtt{Ass}$ and not $\Cat_\mathtt{Ass}$, we will need a way to exchange adjunction data on this subcategory with genuine adjunctions involving left Kan extensions of $\sh F$.

\item We need to address the slice of the Bergner model structure on $s\Cat_\mathtt{Ass}$ and show that it presents $\Cat_\mathtt{Ass}$.\end{itemize}

\begin{defn}[Formal adjunction]Let $\rm C\subseteq\rm C^\wedge$ and $\rm D$ be categories, $\rm C\stackrel{L}{\rightarrow}\rm D\stackrel{R}{\rightarrow}\rm C^\wedge$ two functors. A \emph{formal adjunction} $L\dashv R$ between $L$ and $R$ is the data of a natural isomorphism
\[ c:\rm C,\ d:\rm D \qquad \rm C^\wedge(c,Rd)\cong \rm D(Lc,d). \]
By the Yoneda lemma, $L$ is uniquely determined by $R$; if $\rm C$ is dense in $\rm C^\wedge$, the converse also holds. 
\end{defn}

\begin{lemma}[Extending a formal adjunction]\label{CAT_ENRICH_ADJOINT_LEMMA}Let $\rm{adj}:L\dashv R$, $\rm C\rightarrow\rm D\rightarrow\rm C^\wedge$ be a formal adjunction. Suppose that $L$ admits a (pointwise) left Kan extension $L^\wedge:\rm C^\wedge\rightarrow\rm D$. Then there is a unique adjunction $L^\wedge\dashv R$ extending the given formal adjunction. In particular, $\Aut(L)=\Aut(L^\wedge)$.\end{lemma}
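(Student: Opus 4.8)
The plan is to produce the adjunction equivalence $\rm C^\wedge(c',Rd)\simeq\rm D(L^\wedge c',d)$, natural in $c':\rm C^\wedge$ and $d:\rm D$, directly from the pointwise Kan extension formula together with the given formal adjunction. Write $j:\rm C\hookrightarrow\rm C^\wedge$ for the inclusion and $\rm C_{/c'}:=\rm C\times_{\rm C^\wedge}\rm C^\wedge_{/c'}$ for the comma $\infty$-category of objects of $\rm C$ over $c'$. Since $L^\wedge$ is a pointwise left Kan extension, $L^\wedge c'\simeq\colim_{(c,\,jc\to c')\in\rm C_{/c'}}Lc$, and because each corepresentable $\rm D(-,d):\rm D^\op\to\mathcal S$ carries colimits to limits, I would first obtain the natural equivalence
\[ \rm D(L^\wedge c',d)\simeq\lim_{(c,\,jc\to c')\in\rm C_{/c'}}\rm D(Lc,d). \]

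Next I would rewrite the right-hand side using the formal adjunction. The equivalence $\rm D(Lc,d)\simeq\rm C^\wedge(jc,Rd)$ is natural in $c$, so it transports the limit above to $\lim_{(c,\,jc\to c')}\rm C^\wedge(jc,Rd)$. Since $\rm C_{/c'}$ is the $\infty$-category of elements of the presheaf $\rm C^\wedge(j-,c')$, this limit is by the co-Yoneda identity the space of natural transformations $\rm C^\wedge(j-,c')\Rightarrow\rm C^\wedge(j-,Rd)$ of presheaves on $\rm C$.

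The crux is then the identification of this mapping space with $\rm C^\wedge(c',Rd)$, which is exactly full faithfulness of the restricted-Yoneda (nerve) functor $\rm C^\wedge\to\PSh(\rm C)$, $a\mapsto\rm C^\wedge(j-,a)$ — that is, the density of $\rm C$ in $\rm C^\wedge$ recorded in the preceding definition. This is where the real content sits: a one-point example for $\rm C$ shows the conclusion genuinely fails without density, so the hypothesis (available throughout by construction of $\rm C^\wedge$) cannot be dispensed with. Chaining the three equivalences and verifying naturality in $d$ as well yields the adjunction $L^\wedge\dashv R$; restricting its unit along $j$ and using the canonical equivalence $L^\wedge j\simeq L$ (valid because $j$ is fully faithful) confirms that it extends the given formal adjunction.

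Finally, for uniqueness and the $\Aut$ statement I would argue purely from the universal property of the Kan extension, which gives $\mathrm{Nat}(L^\wedge,G)\simeq\mathrm{Nat}(L,Gj)$ for every $G:\rm C^\wedge\to\rm D$. An adjunction $L^\wedge\dashv R$ is determined by its unit $\eta:\iden_{\rm C^\wedge}\Rightarrow RL^\wedge$; taking $G=RL^\wedge$ shows $\eta$ is pinned down by $\eta j$, which must be the formal unit, whence uniqueness. Taking $G=L^\wedge$ and restricting to invertible transformations gives $\Aut(L^\wedge)\simeq\Aut(L)$. I expect the only real obstacle to be technical rather than conceptual: phrasing the first two steps $\infty$-categorically — replacing strict (co)ends by limits over the comma $\infty$-category and over the twisted-arrow category, and checking that the comparison maps produced are the canonical ones — since each ingredient (pointwise Kan extension, co-Yoneda, density) is standard in this setting.
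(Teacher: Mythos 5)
Your proof is correct, but it is worth noting that the paper's own ``proof'' of this lemma is literally \emph{Immediate from the definitions} --- so you have supplied an argument where the paper supplies none, and the chain you give (pointwise Kan extension formula $\rm D(L^\wedge c',d)\simeq\lim_{\rm C_{/c'}}\rm D(Lc,d)$, then the formal adjunction, then co-Yoneda, then full faithfulness of restricted Yoneda) is exactly the right way to fill it in. The most valuable thing you do is isolate the load-bearing hypothesis: the final identification $\mathrm{Nat}(\rm C^\wedge(j-,c'),\rm C^\wedge(j-,Rd))\simeq\rm C^\wedge(c',Rd)$ genuinely requires $\rm C$ to be dense in $\rm C^\wedge$, and your empty/one-point counterexample shows the lemma is false as literally stated, since the paper mentions density only as an aside in the preceding definition rather than as a hypothesis. (In the application, $\Delta_\mathtt{Ass}\subseteq\Cat_\mathtt{Ass}$, density does hold, so nothing downstream breaks.) One small inaccuracy: in the uniqueness step, the unit $\eta:\iden_{\rm C^\wedge}\Rightarrow RL^\wedge$ is a transformation of endofunctors of $\rm C^\wedge$, so its determination by $\eta j$ does not follow from the universal property $\mathrm{Nat}(L^\wedge,G)\simeq\mathrm{Nat}(L,Gj)$ for $G:\rm C^\wedge\rightarrow\rm D$ as you write; you need instead that $\iden_{\rm C^\wedge}$ is the left Kan extension of $j$ along $j$ --- which is again density --- or, more cheaply, the general fact that the space of adjunction data between a fixed pair of functors is empty or contractible. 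The $\Aut(L)=\Aut(L^\wedge)$ clause is handled correctly via $L^\wedge j\simeq L$.
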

\begin{proof}Immediate from the definitions.\end{proof}

\begin{para}[$\sh F$]
We recall that Hinich's functor $\sh F$ is, by definition, formally left adjoint to $\mathtt{Ass}$, as in:
\[\xymatrix{ \Delta_\mathtt{Ass}\ar@{>->}[d] \ar[r] & \Cat\ar[dl] \\ \Cat_\mathtt{Ass}. }\]
Moreover, $\sh F$ actually factors through the subcategory $\Cat^{\leq1}\subset\Cat$ of classical (1-truncated) categories.
\end{para}

\begin{para}[Model structure on a slice]The slice category $s\Cat_\mathtt{Ass}$ inherits sets of fibrations, cofibrations, and weak equivalences from the corresponding sets in the Bergner model structure by inverse image along the forgetful functor $s\Cat_\mathtt{Ass}\rightarrow s\Cat$. It is well-known (and straightforward to see) that this is also a model structure. By lemma \ref{QUILLEN_SLICE} below, its localisation is the $\infty$-category $\Cat_\mathtt{Ass}$.\footnote{The lemma is surely well-known, but it seems easier to prove it than to locate a reference.}
\end{para}

\begin{lemma}[Slicing over a local object commutes with localisation]\label{QUILLEN_SLICE}Let $(\rm C,W)$ be a relative category, $\rm C\rightarrow\rm C[W^{-1}]$ an ($\infty$-categorical) localisation. Let $X:\rm C$ be an object, and let $W_X\subseteq\rm C_X$ be the class of morphisms inherited from $W$ under the forgetful functor $\rm C_X\rightarrow\rm C$.

If $X$ is $W$-local, then the natural functor $\rm C_X\rightarrow \rm C[W^{-1}]_X$ is a localisation of $\rm C_X$ at $W_X$.\end{lemma}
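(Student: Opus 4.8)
The plan is to verify the universal property of localisation for the induced functor $\gamma_X:\rm C_X\to\rm C[W^{-1}]_X$ (where $\gamma:\rm C\to\rm C[W^{-1}]$ is the localisation), working in presheaf $\infty$-categories so as to isolate the single place where $W$-locality of $X$ intervenes. That $\gamma_X$ inverts $W_X$ is immediate, since the slice projection $\rm C[W^{-1}]_X\to\rm C[W^{-1}]$ is a right fibration — hence conservative — and carries $\gamma_X(W_X)$ into $\gamma(W)$, which consists of equivalences. The substance is to identify $\rm C[W^{-1}]_X$ with $\rm C_X[W_X^{-1}]$.

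First I would extract the mapping-space consequence of locality. Writing $h_X=\Map_{\rm C}(-,X):\rm C^\op\to\sh S$ for the representable presheaf, $W$-locality of $X$ is exactly the statement that $h_X$ inverts $W$, i.e.\ that $h_X$ is a $W$-local object of $\PSh(\rm C)$. The restriction $\gamma^*:\PSh(\rm C[W^{-1}])\hookrightarrow\PSh(\rm C)$ is the reflective inclusion of the $W$-local presheaves, and its left adjoint $\gamma_!$ preserves representables, so $\gamma_!h_X\simeq\Map_{\rm C[W^{-1}]}(-,\gamma X)$; since $h_X$ is already local the unit $h_X\xrightarrow{\ \sim\ }\gamma^*\gamma_!h_X$ is an equivalence, which on evaluation at $c:\rm C$ gives $\Map_{\rm C}(c,X)\simeq\Map_{\rm C[W^{-1}]}(\gamma c,\gamma X)$. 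In particular every morphism $\gamma c\to\gamma X$ lifts to a genuine map $c\to X$, so $\gamma_X$ is essentially surjective.

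Next I would pass to presheaves, using the identification $\PSh(\rm D_Y)\simeq\PSh(\rm D)_{/h_Y}$ of presheaves on a slice with the slice of presheaves over the representable, to realise $\gamma_X^*$ as the composite
\[\PSh(\rm C[W^{-1}]_X)\simeq\PSh(\rm C[W^{-1}])_{/\gamma_!h_X}\xrightarrow{\ \gamma^*\ }\PSh(\rm C)_{/h_X}\simeq\PSh(\rm C_X).\]
Slicing the reflective inclusion $\gamma^*$ over the local object $h_X$ keeps it fully faithful, with essential image those objects $P\to h_X$ for which $P$ is $W$-local. The remaining task — and the main obstacle — is to match this class with the $W_X$-local presheaves on $\rm C_X$: a presheaf on $\rm C_X$ is a map $P\to h_X$, and it inverts $W_X$ iff the maps induced on the relevant fibres by $W$-morphisms over $X$ are equivalences; because $h_X$ itself inverts $W$ — and this is the sole use of the hypothesis — this holds for all of $W_X$ exactly when $P$ inverts $W$. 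Thus $\gamma_X^*$ is fully faithful with essential image precisely the $W_X$-local presheaves. A functor inverting $W_X$ with this property induces on $\rm C_X[W_X^{-1}]$ a Morita equivalence onto $\rm C[W^{-1}]_X$, which is therefore fully faithful; together with the essential surjectivity established above it is an equivalence, giving the lemma. Were $X$ not $W$-local, $h_X$ would fail to invert $W$, the slice over $h_X$ would no longer see the correct localisation, and the final matching would break down.
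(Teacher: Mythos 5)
Your proof is correct and follows essentially the same route as the paper's: both identify presheaves on the slice with the slice of presheaves (right fibrations) over the representable $h_X$, and both use $W$-locality of $X$ at exactly one point, to show that a map $P\to h_X$ is $W_X$-local precisely when $P$ is $W$-local. Your write-up is somewhat more complete in that it makes explicit the essential-surjectivity/Morita point needed to pass from the equivalence of local-presheaf categories back to the functor $\gamma_X$ itself, which the paper leaves implicit.
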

\begin{proof}We will show that the categories of $W$-local presheaves are equivalent by realising them as Cartesian fibrations in spaces.

First, composition $\Cart_0(\rm C_X)\rightarrow\Cart_0(\rm C)_X$ with the forgetful functor is an equivalence of categories: indeed, a morphism $\rm E\rightarrow \rm C_X$ of Cartesian fibrations in spaces over $\rm C$ is itself automatically a Cartesian fibration in spaces.

Second, this equivalence exchanges the objects whose restriction to $W$ is a local system. Let $u\rightarrow v$ be in $W$. There is an induced commuting square
\[\xymatrix{ \rm E_v\ar[r]\ar[d] & \rm E_u\ar[d] \\
\rm C(v,x)\ar[r] & \rm C(u,x) }\]
whose lower horizontal arrow is invertible since $x$ is local. Thus the upper arrow is invertible if and only if its restriction to each fibre is invertible.
\end{proof}

\begin{para}[$\sh O_X$]
Gepner-Haugseng's $\sh O_X$ is a functor valued in simplicial multicategories. Throughout, we tacitly replace this multicategory with its operad of operators (reviewed in \cite[\S2.2]{Gepner_Haugseng}), which is a simplicial category over $\mathtt{Ass}$. 

It follows from the definition, and the fact that weak equivalences are preserved by products, that $\sh O_X$ descends to a functor $\Cat\rightarrow\Cat_\mathtt{Ass}$.\end{para}

We are now ready to formulate our main reduction step in the proof of theorem \ref{one}.

\begin{prop}[$\sh F\dashv\sh O$]\label{QUILLEN_ADJOINT_SCAT}Suppose given a formal adjunction $\rm{adj}:\sh F\dashv\sh O$ beetween $\sh F$ and the restriction of $\sh O$ to the category $\Cat^{\leq0}$ of classical 1-categories.

Then there is a unique extension of $\rm{adj}$ to a Quillen adjunction
\[ \hat{\sh F}_\bullet:s\Cat_\mathtt{Ass} \rightleftarrows s\Cat:\sh O, \]
between the Bergner model structure on $s\Cat$ and the slice structure on $s\Cat_\mathtt{Ass}$.\end{prop}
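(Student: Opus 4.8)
The plan is to factor the argument into three moves: first, promote the hypothesised formal adjunction from $\Cat^{\leq0}$ to all of $s\Cat$; second, feed the result into lemma \ref{CAT_ENRICH_ADJOINT_LEMMA} to manufacture the underlying $1$-categorical adjunction $\hat{\sh F}_\bullet\dashv\sh O$, obtaining its uniqueness at the same time; and third, check that this adjunction is Quillen by verifying that $\sh O$ is right Quillen. Since $\sh O$ is specified by an explicit formula whereas $\hat{\sh F}_\bullet$ is only a Kan extension, it is cleaner to test the right adjoint.

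For the first move I would exploit that both $\sigma$ and $\sh F(\sigma)$ are discrete simplicial categories. A simplicial functor out of a discrete simplicial category sees only $0$-simplices, so for every $X:s\Cat$ the set $s\Cat(\sh F(\sigma),X)$ depends only on the underlying ordinary category $X_0$ (objects of $X$ together with the $0$-simplices of its mapping objects), and likewise $s\Cat_\mathtt{Ass}(\sigma,\sh O X)$ depends only on $(\sh O X)_0$. Because the mapping objects of $\sh O_X$ are finite products of those of $X$, and evaluation at level $0$ commutes with finite products, one has $(\sh O X)_0\cong\sh O(X_0)$; hence the classical isomorphism
\[ s\Cat_\mathtt{Ass}(\sigma,\sh O X)\cong\Cat^{\leq0}_\mathtt{Ass}(\sigma,\sh O(X_0))\cong\Cat^{\leq0}(\sh F(\sigma),X_0)\cong s\Cat(\sh F(\sigma),X) \]
propagates from $\Cat^{\leq0}$ to all of $s\Cat$. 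This is exactly a formal adjunction between $\sh F:\Delta_\mathtt{Ass}\to s\Cat$ and $\sh O:s\Cat\to s\Cat_\mathtt{Ass}$. As $s\Cat$ is locally presentable, hence cocomplete, $\sh F$ admits a pointwise left Kan extension $\hat{\sh F}_\bullet$ along $\Delta_\mathtt{Ass}\hookrightarrow s\Cat_\mathtt{Ass}$, so lemma \ref{CAT_ENRICH_ADJOINT_LEMMA} applies and yields a unique adjunction $\hat{\sh F}_\bullet\dashv\sh O$ extending $\rm{adj}$, with $\Aut(\hat{\sh F}_\bullet)=\Aut(\sh F)$.

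It remains to see that this adjunction is Quillen, for which I would show $\sh O$ is right Quillen. Since the slice model structure on $s\Cat_\mathtt{Ass}$ detects fibrations and trivial fibrations along the forgetful functor to $s\Cat$, this reduces to checking that the underlying functor of $\sh O$ preserves Bergner fibrations and trivial fibrations. Testing against the generating cofibrations and generating trivial cofibrations of the Bergner structure isolates three requirements on a map $X\to Y$: surjectivity on objects is preserved, because objects of $\sh O_X$ are tuples of objects of $X$; local (trivial) fibrations are preserved, because the mapping objects of $\sh O_X$ are finite products of those of $X$ while (trivial) Kan fibrations are stable under products and pullbacks; and the equivalence-lifting condition encoded by the generating trivial cofibration $\{*\}\to E$ is preserved.

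The last requirement is where the real work lies, and I expect it to be the main obstacle. One must identify the equivalences in $\sh O_X$ with tuples of equivalences in $X$ — this forces one to unwind the explicit structure of $\sh O_X$ rather than merely its formal product-preservation — and then reduce the lifting of an equivalence along $\sh O X\to\sh O Y$ to componentwise lifting of equivalences along $X\to Y$, which is available because $X\to Y$ is assumed a Bergner fibration. Once these verifications are in place, $\sh O$ is right Quillen; combined with the uniqueness from lemma \ref{CAT_ENRICH_ADJOINT_LEMMA} (the Quillen property being a property, not extra data, of the $1$-adjunction), this furnishes the asserted unique Quillen extension.
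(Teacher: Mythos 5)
Your first move is sound and rather clean: since $\sigma$ and $\sh F(\sigma)$ are discrete simplicial categories, $s\Cat(\sh F(\sigma),X)\cong\Cat^{\leq0}(\sh F(\sigma),X_0)$ and $(\sh O X)_0\cong\sh O(X_0)$, so the formal adjunction does propagate from $\Cat^{\leq0}$ to all of $s\Cat$. The fatal step is the second one. For lemma \ref{CAT_ENRICH_ADJOINT_LEMMA} to deliver an adjunction $L^\wedge\dashv R$ one needs $\rm C^\wedge(x,Rd)\cong\lim_{c\to x}\rm C^\wedge(c,Rd)$, i.e.\ the density of $\rm C$ in $\rm C^\wedge$ alluded to after the definition of formal adjunction. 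But $\Delta_\mathtt{Ass}$ is \emph{not} dense in $s\Cat_\mathtt{Ass}$: a map out of a discrete simplicial category sees only the $0$-simplices of the target's mapping spaces, so the comma category $\Delta_\mathtt{Ass}\downarrow S$, and with it your pointwise Kan extension $\hat{\sh F}_\bullet(S)=\colim_{\sigma\to S}\sh F(\sigma)$, depends only on the underlying ordinary category $S_0$. Meanwhile $s\Cat_\mathtt{Ass}(S,\sh O_X)$ genuinely depends on the higher simplices of the mapping spaces of $S$ (an edge in a mapping space of $S$ must be carried to an edge of a mapping space of $\sh O_X$, not merely to a pair of vertices), so the functor you construct cannot be left adjoint to $\sh O$ --- indeed no left Kan extension from $\Delta_\mathtt{Ass}$ can be. The paper's proof works in the opposite order precisely to dodge this: it first upgrades the formal adjunction to a genuine adjunction $\hat{\sh F}\dashv\sh O$ of ordinary $1$-categories, where density of $\Delta$ is available, and then prolongs \emph{levelwise} to simplicial objects, verifying the adjunction by an end computation over both simplicial indices; the levelwise construction is what allows the left adjoint to see every simplicial degree of the mapping spaces.

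On the Quillen verification you are, if anything, more scrupulous than the paper: you correctly isolate the equivalence-lifting half of the Bergner fibration condition, which the paper's one-line justification (products of simplicial sets preserve fibrations and weak equivalences) does not explicitly address. But you leave that half as an expectation rather than an argument; to close it you would need to identify the equivalences of $\sh O_X$ with tuples of equivalences of $X$ and lift componentwise, which requires unwinding the composition law of $\sh O_X$ rather than its mere product structure. As it stands, the proposal has one step that fails outright (the Kan extension along $\Delta_\mathtt{Ass}\hookrightarrow s\Cat_\mathtt{Ass}$) and one step left unfinished.
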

\begin{proof}We define the functor $\hat{\sh F}_\bullet:s\Cat_\mathtt{Ass}\rightarrow s\Cat$ by applying $\hat{\sh F}$ in each dimension. (Note that $\sh O$ is also defined this way, as products of simplicial sets are calculated dimension-by-dimension.) Then mapping sets are computed using an end:
\begin{align}\Fun_\mathtt{Ass}(S_\bullet,(\sh O_X)_\bullet) &= \rm{end}_{n,m}\Fun_\mathtt{Ass}(S_m,\sh O_{X_n}) \\
&= \rm{end}_{n,m}\Fun(\hat{\sh F}(S_m),X_n) \\
&= \Fun(\hat{\sh F}_\bullet S_\bullet, X_\bullet)\end{align}
so that $\hat{\sh F}_\bullet\dashv \sh O_\bullet$.

Moreover, $\sh O$ preserves weak equivalences and fibrations because weak equivalences and fibrations of simplicial sets are preserved by products and the mapping spaces in $\sh O_X$ are products of those in $X$. In other words, it is a right Quillen functor, so $\hat{\sh F}\dashv\sh O$ is a Quillen adjunction.
\end{proof}

\begin{cor}[$\sh F\dashv\sh O$]\label{QUILLEN_MAIN}Suppose given a formal adjunction $\rm{adj}:\sh F\dashv\sh O$ beetween $\sh F$ and the restriction of $\sh O$ to the category $\Cat^{\leq0}$ of classical 1-categories. There is a unique extension to an adjunction
\[ \hat{\sh F}:1\Cat_{\Delta^\op} \rightleftarrows 1\Cat:\sh O \]
between $\sh O$ and a left Kan extension $\hat{\sh F}$ of $\sh F$.\end{cor}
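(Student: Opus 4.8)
The plan is to descend the Quillen adjunction of Proposition \ref{QUILLEN_ADJOINT_SCAT} to the localised $\infty$-categories and then to recognise the resulting left adjoint as the pointwise left Kan extension demanded by the statement, so that uniqueness follows from Lemma \ref{CAT_ENRICH_ADJOINT_LEMMA}.

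First I would feed the given formal adjunction $\rm{adj}$ into Proposition \ref{QUILLEN_ADJOINT_SCAT} to obtain a Quillen adjunction $\hat{\sh F}_\bullet \dashv \sh O$ between the slice structure on $s\Cat_\mathtt{Ass}$ and the Bergner structure on $s\Cat$. By the theorem of \cite{MGq}, a Quillen adjunction descends to an adjunction of the $\infty$-categories obtained by localising at the respective weak equivalences, so I obtain an adjunction of localisations whose right adjoint is the derived functor of $\sh O$, that is, the $\sh O$ of the statement. The Bergner localisation is $1\Cat$; and by Lemma \ref{QUILLEN_SLICE}, applied to the object $X = \mathtt{Ass}$---which is $W$-local because, being a $1$-category with discrete mapping spaces, it is Bergner-fibrant---the localisation of the slice structure on $s\Cat_\mathtt{Ass}$ is the slice $1\Cat_{\Delta^\op}$. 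This already produces an adjunction $\hat{\sh F} \dashv \sh O : 1\Cat_{\Delta^\op} \rightleftarrows 1\Cat$ with the source and target asserted.

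It remains to identify $\hat{\sh F}$ with a left Kan extension of $\sh F$ and to pin down uniqueness. As a left adjoint, $\hat{\sh F}$ preserves colimits; and since $\hat{\sh F}_\bullet$ was built dimensionwise from the formal left adjoint, its restriction along the dense inclusion $\Delta_\mathtt{Ass}\hookrightarrow 1\Cat_{\Delta^\op}$ recovers $\sh F$. Every object of $1\Cat_{\Delta^\op}$ is the colimit of its simplices, so a colimit-preserving functor restricting to $\sh F$ on $\Delta_\mathtt{Ass}$ is forced to be the pointwise left Kan extension $\hat{\sh F} = \mathrm{Lan}_{\Delta_\mathtt{Ass}}\sh F$. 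With this identification in hand, Lemma \ref{CAT_ENRICH_ADJOINT_LEMMA} applies verbatim: the adjunction extending $\rm{adj}$ is unique, and $\Aut(\sh F) = \Aut(\hat{\sh F})$---the input needed for the uniqueness clause of Theorem \ref{one}.

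The step I expect to cause the most friction is the middle one: reconciling the two descriptions of the left adjoint. Concretely, I must confirm that the abstractly descended left Quillen functor coincides with the pointwise Kan extension $\hat{\sh F}$, and not merely with some colimit-preserving functor agreeing with $\sh F$ at the level of simplicial models. This rests on the density of $\Delta_\mathtt{Ass}$ in $1\Cat_{\Delta^\op}$ and on the compatibility of the dimensionwise Kan extension defining $\hat{\sh F}_\bullet$ with passage to the localisation; checking the hypothesis of Lemma \ref{QUILLEN_SLICE}, namely that $\mathtt{Ass}$ is local, is the one genuinely model-categorical point encountered along the way.
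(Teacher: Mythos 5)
Your proposal follows the paper's own route exactly: feed $\rm{adj}$ into Proposition \ref{QUILLEN_ADJOINT_SCAT}, descend the resulting Quillen adjunction to the localised $\infty$-categories via \cite{MGq}, and identify the localisation of the slice structure using Lemma \ref{QUILLEN_SLICE}. Your additional paragraph identifying the descended left adjoint with the pointwise Kan extension via density of $\Delta_\mathtt{Ass}$ and invoking Lemma \ref{CAT_ENRICH_ADJOINT_LEMMA} for uniqueness is correct and in fact supplies detail for the ``unique'' and ``left Kan extension'' clauses that the paper's two-line proof leaves implicit.
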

\begin{proof}By \cite{MGq}, the derived functors of a Quillen adjunction yield an adjunction of the localised $\infty$-categories. Here we used lemma \ref{QUILLEN_SLICE} to conclude that $s\Cat_\mathtt{Ass}$ is indeed a model for $\Cat_\mathtt{Ass}$.
\end{proof}

\section{The technical bit}\label{TECH}

In this section, we will construct the adjunction at the level of 1-categories. This comparison is straightforward, and it is foreshadowed in \cite[(3.2.9)]{Hin}, but sadly the specifics are rather fiddly.

\begin{prop}[$\sh F\dashv\sh O$]\label{TECH_ADJOINT}There is a formal adjunction
\[\xymatrix{ \Delta_\mathtt{Ass}\ar@{>->}[d]\ar[r]^-{\sh F} & \Cat^{\leq1}\ar[dl]^-{\sh O} \\
\Cat^{\leq1}_\mathtt{Ass} }\]
between Hinich's functor $\sh F$ and the restriction of Gepner-Haugseng's $\sh O$ to classical 1-categories.\end{prop}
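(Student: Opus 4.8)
The plan is to make the claimed formal adjunction completely explicit and then verify naturality by hand. Unwinding the definition of formal adjunction in the present situation, with $\rm C=\Delta_\mathtt{Ass}$, $\rm C^\wedge=\Cat^{\leq1}_\mathtt{Ass}$, $\rm D=\Cat^{\leq1}$, $L=\sh F$ and $R=\sh O$, what must be produced is a natural (in $\sigma:\Delta_\mathtt{Ass}$ and in $X:\Cat^{\leq1}$) isomorphism
\[ \Fun_\mathtt{Ass}(\sigma,\sh O_X)\cong\Fun(\sh F(\sigma),X), \]
which is exactly the shape of Hinich's defining formula for $\mathtt{Ass}_X$. Since $\sh F$ factors through $\Cat^{\leq1}$ and $\sh O$ is here restricted to $\Cat^{\leq1}$, both sides are mapping spaces in a $(2,1)$-category, i.e.\ groupoids of functors and natural isomorphisms; no homotopy coherence beyond that level intervenes, so the whole comparison is a (fiddly) piece of finite combinatorics.

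First I would unwind the left-hand side. Viewing $\sh O_X$ through its operad of operators over $\mathtt{Ass}=\Delta^\op$, I would record its colours and its spaces of multimorphisms explicitly in terms of $X$: a colour is a hom-slot built from a pair of objects of $X$, and a multimorphism encodes a composition pattern whose $1$-cell data is drawn from the arrows of $X$. Specialising to a fixed $\sigma$ — which is just a chain in $\Delta^\op$ — a lift of $\sigma$ to $\sh O_X$ over $\mathtt{Ass}$ then unpacks into a labelling of the vertices of $\sigma$ by objects of $X$ together with a labelling of its operations (cells) by morphisms of $X$, subject to the compatibilities forced by the face structure of the chain.

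Next I would unwind the right-hand side using Hinich's explicit description of $\sh F(\sigma)$, whose objects are the points (strands) of the simplex and whose morphisms are its cells. A functor $\sh F(\sigma)\to X$ is then precisely an assignment of an object of $X$ to each object of $\sh F(\sigma)$ and a morphism of $X$ to each morphism, respecting identities and composition. The core of the argument is to match these two packages of labelling data, and in particular to check that the composition law of $\sh F(\sigma)$ reproduces exactly the constraints imposed by the multicomposition of $\sh O_X$; this yields the desired identification of mapping groupoids for each individual $\sigma$.

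The main obstacle is precisely this composition-matching together with naturality in $\sigma$. One must verify that the face maps of $\Delta_\mathtt{Ass}$ which compose adjacent morphisms in $\Delta^\op$, and the degeneracies which insert identities, are carried to one another under the two constructions. Since the morphisms of $\Delta_\mathtt{Ass}$ are generated by these faces and degeneracies, it suffices to check compatibility on the elementary simplices and on this generating data — a finite but laborious verification, and the point at which the combinatorics of Hinich's $\sh F$ and of Gepner--Haugseng's operad of operators genuinely have to be confronted head-on. Naturality in $X$, by contrast, is immediate, since both sides are manifestly functorial in $X$ and the identification is defined by transporting labels along functors $X\to X'$. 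With the natural isomorphism established, the formal adjunction $\sh F\dashv\sh O$ is exhibited, completing the proof.
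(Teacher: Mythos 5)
Your overall strategy coincides with the paper's: unwind the formal adjunction to an explicit natural isomorphism $\Fun_\mathtt{Ass}(\sigma,\sh O_X)\cong\Fun(\sh F(\sigma),X)$, build it by matching labelling data on low-dimensional simplices, and propagate naturality along the generating faces and degeneracies (naturality in $X$ being automatic). But as written the proposal defers essentially all of the content of the proposition: it is a plan for the verification rather than the verification itself, and the two points you flag as "the main obstacle" are precisely the points that cannot be closed by simply grinding through the combinatorics. First, on $0$-simplices there are exactly two natural-in-$X$ identifications of $\rm{Ob}(\sh O_X(\sigma_0))$ with $\Fun(\{[1]\subseteq\sigma_0\}\times\{\bf x,\bf y\},X)$, differing by swapping the two factors, and only one of them extends compatibly over $1$-simplices. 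Your proposal never confronts this choice; the paper fixes it by an explicit dictionary between Hinich's $\bf x,\bf y$-labelled vertices and Gepner--Haugseng's variables in the case $\sigma_1=[1]$, and then deduces the general $1$-simplex case from the operadic decomposition $\sh F(\sigma)=\coprod_{i:[1]\subseteq\sigma_1}\sh F(\sigma[i])$ together with an exchange of products with the Grothendieck integral.

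Second, your step "check that the composition law of $\sh F(\sigma)$ reproduces exactly the constraints imposed by the multicomposition of $\sh O_X$" cannot be carried out as a direct comparison, because \cite[Def.\ 4.2.4]{Gepner_Haugseng} does not fully specify the composition law of $\sh O_X$ --- there is nothing completely explicit on the other side to compare against. The paper gets around this by a uniqueness argument: the categories $\sh F(\sigma)$ are posets, so there is only one candidate formula built from composition in $X$ mapping between the relevant labelling sets, and this must be what Gepner--Haugseng intend. Some such argument (or a genuinely explicit unwinding of their composition law) is needed to complete your outline; similarly, the compatibility with outer face maps is not a computation but falls out of the observation that $\sh F_0\subset\sh F$ is a subfunctor. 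Without these ingredients the proposal correctly identifies the shape of the proof but leaves its substance open.
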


\begin{para}[Notation]\label{TECH_NOTATION}In what follows, we denote objects of $\mathtt{Ass}=\Delta^\op$ in the format $[n]$, corresponding to the $n$-simplex $\Delta^n:\Delta$ in the opposite category. If $\sigma:\Delta^k\rightarrow\mathtt{Ass}$ is a $k$-simplex of $\mathtt{Ass}$, we write $\sigma_i$ for its $i$th vertex, so $\sigma_i=[n]$ for some $n:\N$.

\

\noindent\emph{Inert}. Let us write $[k]\subseteq[n]$ when $[k]$ is a convex subset of the totally ordered set $[n]$ (dual to an \emph{inert} morphism $[n]\rightarrow[k]$ in $\mathtt{Ass}$ \cite[Def.\ 3.1.1]{Gepner_Haugseng}). If $f:[m]\rightarrow[n]$ is a morphism in $\mathtt{Ass}$, there is an induced pullback operation $f^{-1}$ on the poset of convex subsets. It is constructed by taking the convex hull of the image under the dual map $\Delta^n\rightarrow\Delta^m$. 

\

\noindent\emph{Decomposition}. In particular, the $n$ inclusions $i:[1]\subseteq[n]$ induce a decomposition of $[m]$ as a concatenation $ \rm{cat}_{i:[1]\subseteq[n]}[m_i]=f^{-1}i$. This decomposition is used to formulate the key axiom in the definition of a planar operad \cite[Def.\ 3.1.3, (ii)]{Gepner_Haugseng}.
\end{para}

\begin{para}[Graphs]Denote by $\Delta^{[0,1]}\subset\Delta$ the full subcategory spanned by $\Delta^0$ and $\Delta^1$. An \emph{oriented graph} $\Gamma$ defines a presheaf on $\Delta^{[0,1]}$ by defining $\Map(\Delta^k,\Gamma)$ to be the set of vertices of $\Gamma$ if $k=0$ and the union of the sets of (oriented) edges and of vertices for $k=1$. The graph may be recovered from its associated presheaf: the vertices are the 0-simplices, the edges are the non-degenerate 1-simplices, and the face maps yield the incidence relation. 

We define the \emph{category of oriented graphs} to be the full subcategory $\bf{Grf}\subset\PSh(\Delta^{[0,1]})$ spanned by the presheaves obtained in this way. Left Kan extension of the inclusion $\Delta^{[0,1]}\subset\Cat$ yields a colimit-preserving functor 
\[\langle-\rangle:\bf{Grf}\rightarrow\Cat,\]
the \emph{free category} functor. It lands in $\Cat^{\leq1}$.

The finite inhabited totally ordered sets may be thought of as oriented graphs with vertices the elements of the set and edges the nearest-neighbour order relations. Under this correspondence, the $n$-simplex corresponds to an $A_{n+1}$ graph. Let $\Delta^\rm{Grf}\subset\bf{Grf}$ be the full subcategory spanned by these objects. The free category functor fits into a square
\[\xymatrix{ \Delta^\rm{Grf}\ar[r]\ar[d] & \Delta\ar[d] \\ \bf{Grf}\ar[r] & \Cat.}\]
The map $\Delta^\rm{Grf}\rightarrow\Delta$ is the inclusion of a wide subcategory whose morphisms, in terms of the usual terminology for morphisms in $\Delta$, are generated by the degeneracy and \emph{outer} face maps.

We denote by $\Delta^{[0,1]}_\mathtt{Ass}\subset\Delta^\rm{Grf}_\mathtt{Ass}\subset\Delta_\mathtt{Ass}$ the corresponding comma categories.
\end{para}

\begin{para}[Summary of the definition of $\sh F$]\label{TECH_F_DEF}The definition of the functor $\sh F$ is rather involved --- it occupies 5 pages of \cite[\S3.2]{Hin} --- and since the proof of \ref{TECH_ADJOINT} will follow the same lines it will be helpful to have a summary of it here.

\begin{enumerate}
\item First, $\sh F^{[0,1]}$ is defined explicitly on the full subcategory $\Delta^{[0,1]}\subseteq\Delta$ spanned by the zero and one-simplices of $\mathtt{Ass}$:
\begin{enumerate}
\item It is defined as a map on the set $\{\Delta^0\}_\mathtt{Ass}$ of zero-simplices by the formula:
\begin{align}\label{F_def_0} \sh F^{[0,1]}(\sigma:\Delta^0\rightarrow\mathtt{Ass}):=\{[1]\subseteq\sigma_0\}\times\{\bf x,\bf y\} \end{align}
where $\{[1]\subseteq\sigma_0$ is the set (of cardinality $\#\sigma_0-1$) of edges of $\sigma_0$.

Let us denote by $\sh F_0:\Delta_\mathtt{Ass}\rightarrow\bf{Set}$ the functor that associates to $\sigma:\Delta^k\rightarrow\mathtt{Ass}$ the disjoint union of $\sh F^{[0,1]}(\tau)$, where $\tau$ ranges over all vertices of $\Delta^k$. Note that this is a left Kan extension of the restriction of $\sh F$ to the domain $\{\Delta^0\}_\mathtt{Ass}$ of \eqref{F_def_0}. In general, $\sh F(\sigma)$ will be a certain category structure on $\sh F_0(\sigma)$.

\item For the set of 1-simplices I will not reproduce the full definition \cite[(3.2.3)]{Hin}; suffice it to say that for the generating case $\sigma_1=[1]$, $\sh F^{[0,1]}(\sigma)$ is described by the diagram
\[\xymatrix{
\bf x_{01} & \bf y_{01}\ar[r] & \bf x_{12}\ar@{}[r]|-\cdots  & \bf y_{(n-2)(n-1)}\ar[r] & \bf x_{(n-1)n} & \bf y_{(n-1)n} \ar[dll] \\
&&\bf x_{01}\ar[ull] & \bf y_{01}
}\]
where the vertices of the two rows are $\sh F^{[0,1]}(\sigma_0)$ and $\sh F^{[0,1]}(\sigma_1)$, respectively. This diagram appears as \cite[(3.2.4), fig.\ (30)]{Hin}. 
More generally, for any $\sigma:\Delta^1\rightarrow\mathtt{Ass}$, the result is a finite disjoint union of categories equivalent to $\Delta^1$.

\item The two face maps of $\Delta^{[0,1]}$ are such that their coproduct is the inclusion
\begin{align}\sh F_0(\sigma)=\sh F^{[0,1]}(\sigma_0)\sqcup \sh F^{[0,1]}(\sigma_1) \hookrightarrow\sh F^{[0,1]}(\sigma) \end{align}
for any $\sigma:\Delta^1\rightarrow\mathtt{Ass}$, as was implicit in the preceding diagram. For the degeneracy map, note only that a the image of a degenerate simplex on $[n]$ is the graph
\[\xymatrix{
\bf x_{01} & \bf y_{01}\ar[d]\ar@{}[r]|-\cdots & \bf x_{(n-1)n} & \bf y_{(n-1)n}\ar[d] \\
\bf x_{01}\ar[u] & \bf y_{01}\ar@{}[r]|-\cdots & \bf x_{(n-1)n}\ar[u] & \bf y_{(n-1)n}
}\]
which maps in a unique way to $\sh F^{[0,1]}([n])$ preserving the labelling.

By illustration, the face maps are sections of the degeneracy map, that is, they obey the simplicial identities; hence these formulae define a functor $\sh F^{[0,1]}:\Delta^{[0,1]}_\mathtt{Ass}\rightarrow\bf{Grf}$.
\end{enumerate}
\item By left Kan extension we may immediately extend $\sh F^{[0,1]}$ to a functor of oriented graphs $\sh F^\rm{Grf}:\bf{Grf}_\mathtt{Ass}\rightarrow\bf{Grf}$, and in particular, of $\Delta^\rm{Grf}_\mathtt{Ass}$. Since $\sh F_0\mid\Delta^\rm{Grf}$ is also a left Kan extension, it defines a subfunctor $\sh F_0^\rm{Grf}\subset\sh F^\rm{Grf}$.

(Actually, Hinich's definition uses an explicit colimit \cite[(3.2.5)]{Hin} over the poset of cells of the graph $\Delta^k$. This poset is a reflective subcategory of the full comma category $\Delta^{[0,1]}\downarrow_\bf{Grf}\Delta^k$ with reflector given by taking the image. In particular, it is cofinal, and so this colimit does indeed compute the left Kan extension.)

\item Finally, one defines functoriality for the \emph{inner} face maps, which encode composition in $\mathtt{Ass}$ and hence in the multicategory $\mathtt{Ass}_X$. Since $\sh F(\sigma)=\sh F^\rm{Grf}(\sigma)$ is a poset, and we already have the action of inner face maps on the underlying set $\sh F_0(\sigma)$, it is merely a condition for them to be functors. This is checked \cite[Lemma 3.2.7]{Hin}.
\end{enumerate}

\end{para}

\begin{para}[Vertices and edges]We will define the natural transformation separately for each object and morphism in $\Delta^{[0,1]}$.

\

\noindent $(|\sigma|=0)$. The operad $\sh O_X$ has set of objects $\rm{Ob}X\times\rm{Ob}X$ which we identify with $\Map(\{\bf x,\bf y\},X)$ in the order implied by the orthography. 
\begin{align}\label{adj _0} \Fun_\mathtt{Ass}(\sigma,\sh O_X) &= \rm{Ob}(\sh O_X(\sigma_0)) \\
&=(X\times X)^{\{[1]\subseteq\sigma_0\}} \\
&= \Fun(\{[1]\subseteq\sigma_0\}\times\{\bf x,\bf y\},X)
\end{align}
(Note that there is exactly one other way to identify these spaces naturally in $X$, which simply reverses the factors. The arguments of \S\ref{UNIQUE} show that it is not possible to make this identification compatible with edges.)

\

\noindent $(|\sigma|=1)$. A 1-simplex $\sigma:\Delta^1\rightarrow\mathtt{Ass}$ is the data of a morphism $\sigma_0\rightarrow\sigma_1$ in $\mathtt{Ass}$. As in the definition \ref{TECH_F_DEF}-i)-(b) of $\sh F$, we separate the case that $\sigma_1=[1]$.

\begin{itemize}
\item $(\sigma_1=[1])$. The equivalence of $\Map(\sh F(\sigma),X)$ with \cite[Def.\ 4.2.4]{Gepner_Haugseng} is defined by the labelling
\begin{align}\label{diagram_label}\xymatrix{x_0 & y_1\ar[r] & x_1 \ar@{}[r]|\cdots & y_{n-1}\ar[r] & x_{n-1} & y_n\ar[dll] \\
&& y_0\ar[ull] & x_n}\end{align}
of Hinich's diagram with Gepner-Haugseng's variables: the set of such diagrams is precisely
\begin{align}\label{mapping_label} \sh O_X(\underbrace{(x_0,y_1)}_{u_1},\ldots,\underbrace{(x_{n-1},y_n)}_{u_n};\ \underbrace{(y_0,x_n)}_{v})
&= X(y_0,x_0)\times\cdots \times X(y_n,x_n)\end{align}
as a functor of $\{(x_i,y_i)\}_{i=0}^n=(u,v)$, that it is the space of polymorphisms of $\sh O_X$. 

Now integrating over $(X^\op\times X)^n$,
\begin{align}\label{adj_11} \rm{adj}:\Fun_\mathtt{Ass}(\sigma,\sh O_X) & = \int_{u:\sh O_X(\sigma_1)}\int_{v:\sh O_X(1)} \sh O_X(u,v)\\
&= \int_{u:(X^\op\times X)^n}\int_{v:X^\op\times X}\prod_{i=0}^n X(y_i,x_i) & \text{by def.}\ \eqref{mapping_label}\\
&= \Fun(\sh F(\sigma),X). & \text{by labelling }\eqref{diagram_label}
\end{align}
(For reasons of formatting I have found it necessary to confuse Gepner-Haugseng's variables with my own, underset in \eqref{mapping_label}. Fully expanded, the definition of the polymorphism set in this notation is as follows:
\begin{align}
\sh O_X\left((u^\bf x_1,u^\bf y_1),\ldots,(u^\bf x_n, u^\bf y_n);\ (v^\bf x,v^\bf y) \right)
&= X\left(v^\bf x,u^\bf x_1\right)\times\prod_{i=1}^nX\left(u^\bf y_{i-1},u^\bf x_i\right) \times X\left(u^\bf y_n,v^\bf y\right).
\end{align}
Beware that $x$s and $y$s do not occur in the same order as Hinich's $\bf x$s and $\bf y$s as described in definition \ref{TECH_F_DEF}-i)-(c).)

\item (General case). As explained in \ref{TECH_NOTATION}, to each segment $i:[1]\subseteq \sigma_1$ there corresponds by pullback a segment $\sigma_0[i]\subseteq\sigma_0$, and $\sigma_0=\rm{cat}_{i:[1]\subseteq\sigma_1}\sigma_0[i]$. Denote by $\sigma[i]$ the corresponding arrow $[m_i]\rightarrow[1]$ so that $\sigma$ is the concatenation of the $\sigma[i]$ with respect to the concatenation operation on $\mathtt{Ass}$.

By definition,
\begin{align}\label{F_GEN}\sh F(\sigma):= \coprod_{i:[1]\subseteq \sigma_1} \sh F(\sigma[i])\end{align}
and so we calculate:
\begin{align} \Fun_\mathtt{Ass}(\sigma,\sh O_X)&=\int_{u:\sh O_X(\sigma_0)}\int_{v:\sh O_X(\sigma_1)}\sh O_X(u,v) \\
&=\int_{u:\sh O_X(\sigma_0)}\int_{v:\sh O_X(\sigma_1)} \prod_{i:[1]\subseteq\sigma_1}\sh O_X(u_i,v_i) & 
\sh O_X\text{ is an operad}\\
\label{prod=int}&=\prod_{i:[1]\subseteq\sigma_1}\int_{u:\sh O_X((\sigma_0)_i)}\int_{v:\sh O_X(1)} \sh O_X(u,v) & 
\textstyle\int\leftrightarrow \prod \\
&= \prod_{i=1}^n \Fun(\sh F(\sigma_i),X) & \text{via}\ \rm{adj}\ \eqref{adj_11}\\
&= \Fun(\sh F(\sigma),X). & \text{by def.}\ \eqref{F_GEN}
\end{align}\end{itemize}

\

\noindent\emph{Face maps}. Naturality with respect to face maps is an automatic consequence of the fact that $\sh F_0\subset\sh F$ is a subfunctor.

\

\noindent\emph{Degeneracy}. Since the operad structure gives us a decomposition
\[\xymatrix{
\Fun_\mathtt{Ass}(\sigma,\sh O_X) \ar@{=}[r]\ar[d] & (X^\op\times X)^{\{[1]\subseteq\sigma_0\}}\ar[d]\\
\Fun_\mathtt{Ass}(\delta\sigma,\sh O_X)\ar@{=}[r] & (X^\op\times X)^{\{[1]\subseteq\sigma_0\}\times \Delta^1}
}\]
it suffices to check that the square
\[\xymatrix{
X^\op\times X\ar[d] \ar[r]^-{\rm{adj}_0} & \Fun(\sh F(\sigma),X)  \ar[d]\ar[l]\\
(X^\op\times X)^{\Delta^1} \ar[r]^-{\rm{adj}_1}  &  \Fun(\sh F(\delta\sigma),X)\ar[l]
}\]
commutes in the case $\sigma_0=[1]$. But then, applying $\rm{adj_0}$ to $u=(u^\bf x,u^\bf y)$ yields the diagram
\[\xymatrix{ u^\bf x & u^\bf y\ar[d]^\iden \\ u^\bf x\ar[u]^{\iden} & u^\bf y }\]
which the reader will readily see is a special case of the labelling \eqref{diagram_label} applied to $\sh O_X(u,u)=\sh O_X((u^\bf x,u^\bf y),(u^\bf x,u^\bf y))$.\end{para}

\begin{remark}I comment here on the exchange of products with Grothendieck integral that occurs in line \eqref{prod=int}. We have used here the commutativity of a diagram
\[\xymatrix{ \prod_i\Fun(\rm C_i,\bf S)\ar[r]\ar[d] & \prod_i\Cart_0(\rm C_i)\ar[d] \\
\Fun(\prod_i\rm C_i,\bf S) \ar[r] & \Cart_0(\rm C_i) }\]
I don't know a reference for this fact, but it can be deduced rather easily from the naturality of the construction in $\rm C$ and the fact that $\int$ preserves products (and indeed, all limits).\end{remark}

\begin{para}[Inner face maps]For our adjunct isomorphism $\rm{adj}$ to be natural in $\Delta_\mathtt{Ass}$, rather than merely $\Delta^\rm{Grf}_\mathtt{Ass}$, is a condition: that the square
\[\xymatrix{ 
\Fun_\mathtt{Ass}(\sigma,\sh O_X)\ar@{<->}[r]\ar[d]_\circ & \Fun(\sh F(\sigma),X)\ar[d]\\
\Fun_\mathtt{Ass}(\sigma,\sh O_X)\ar@{<->}[r] & \Fun(\sh F(\tau),X)
 }\]
induced by the inner face map $\Delta^1\rightarrow\Delta^2$ is commutative. The left-hand vertical arrow here is the composition law in $\sh O_X$. Since \cite[Def.\ 4.2.4]{Gepner_Haugseng} is not absolutely explicit about the definition of this composition law, the most I can show here is that what they say corresponds intuitively to the behaviour of the right-hand vertical arrow.

The functor $\sh F(\tau)\rightarrow\langle\sh F(\sigma)\rangle$ sends each morphism of $\sh F(\tau)$ to a (unique) composition of edges of the graph $\sh F(\sigma)$. It is reasonable to suppose that this is what Gepner-Haugseng meant by defining composition "in the obvious way, using composition in $X$" --- indeed, since the diagrams involved are posets, it is the \emph{only} possible formula in terms of the composition law of $X$ that maps from $X^\sigma$ to $X^\tau$.

For an idea of precisely which compositions occur, consult the pictures in \cite[(3.2.6)]{Hin}.
\end{para}

\section{Unicity}\label{UNIQUE}
In this section, we will show that the automorphism group of $\sh F$ is trivial. We continue to use the conventions \ref{TECH_NOTATION}, \ref{TECH_F_DEF} outlined in \S\ref{TECH}.

\begin{prop}\label{UNIQUE_PROP}The automorphism group of the composite of $\sh F$ with groupoid completion $B\sh F:\Delta_\mathtt{Ass}\rightarrow\bf S$ is trivial.\end{prop}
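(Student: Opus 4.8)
The plan is to reduce the statement to a rigidity problem for a $\bf{Set}$-valued functor and then to exploit the asymmetry of Hinich's gluing diagrams. First I would note that each $\sh F(\sigma)$ is a poset whose connected components are contractible: for a $1$-simplex this is the assertion, recalled in \ref{TECH_F_DEF}, that $\sh F(\sigma)$ is a disjoint union of copies of $\Delta^1$, and the general case is a colimit of these along the poset of cells. Hence $B\sh F(\sigma)=\pi_0\sh F(\sigma)$ is a finite set, and a natural automorphism of $B\sh F$ is exactly a natural automorphism $\phi$ of the functor $\pi_0\sh F:\Delta_\mathtt{Ass}\rightarrow\bf{Set}$. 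Now the subfunctor $\sh F_0\subseteq\sh F$ of \ref{TECH_F_DEF} is surjective on objects, and $\sh F_0(\sigma)=\coprod_\tau\sh F^{[0,1]}(\tau)$ is the coproduct over the vertices $\tau$ of $\sigma$, each inclusion $\tau\hookrightarrow\sigma$ being a morphism of $\Delta_\mathtt{Ass}$. Applying $\pi_0$ produces a natural surjection $\coprod_\tau\pi_0\sh F^{[0,1]}(\tau)\twoheadrightarrow\pi_0\sh F(\sigma)$, so any natural endomorphism of $\pi_0\sh F$ is determined by its values on $0$-simplices. It therefore suffices to show $\phi_{[n]}=\iden$ for every $0$-simplex $[n]$, where $\pi_0\sh F^{[0,1]}([n])=\{[1]\subseteq[n]\}\times\{\bf x,\bf y\}$.

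The base case is $\phi_{[1]}$, a permutation of $\{\bf x,\bf y\}$, and I would rule out the transposition by testing naturality against the active $1$-simplex $\mu:[2]\rightarrow[1]$. Reading off \eqref{diagram_label} for $n=2$, the set $\pi_0\sh F(\mu)$ has three classes; the $\sigma_1$-vertex map sends $\pi_0\sh F([1])=\{\bf x,\bf y\}$ injectively to two of them, while the image of the $\sigma_0$-vertex map $\pi_0\sh F([2])\rightarrow\pi_0\sh F(\mu)$ misses precisely the class $\iota_1(\bf y)$. By naturality the image of the $\sigma_0$-vertex map is $\phi_\mu$-invariant; since $\iota_1(\bf x)$ lies in that image but the transposition would force $\phi_\mu(\iota_1(\bf x))=\iota_1(\bf y)$ outside it, the transposition is excluded and $\phi_{[1]}=\iden$. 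This is the mechanism behind the parenthetical remark of the previous section that the $\bf x\leftrightarrow\bf y$ reversal cannot be made compatible with edges.

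For a general $[n]$ I would isolate each edge $e=[1]\subseteq[n]$ by means of the inert $1$-simplex $\iota_e:[n]\rightarrow[1]$ selecting $e$. The property I need about $\sh F(\iota_e)$ is an \emph{isolation} statement: the $\sigma_1$-vertex map $\{\bf x,\bf y\}\rightarrow\pi_0\sh F(\iota_e)$ is injective, and its two classes are hit, from $\pi_0\sh F([n])$, by the single elements $(e,\bf x)$ and $(e,\bf y)$ respectively, the remaining edges of $[n]$ contributing disjoint classes. Granting this, naturality against $\iota_e$ together with $\phi_{[1]}=\iden$ pins $\phi_{[n]}(e,\bf x)=(e,\bf x)$ and $\phi_{[n]}(e,\bf y)=(e,\bf y)$; note that the same statement simultaneously forbids permuting the edges, since each $\iota_e$ names a specific $e$, so no separate argument about the linear order is required. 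Ranging over all $e$ gives $\phi_{[n]}=\iden$, and the reduction of the first paragraph then yields $\phi=\iden$ on all of $\Delta_\mathtt{Ass}$.

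The one genuinely diagram-chasing step, and the step I expect to be the main obstacle, is the isolation property for the inert simplices $\iota_e$. It requires unwinding Hinich's definition of $\sh F$ on morphisms other than the generating composition diagram \eqref{diagram_label}, and checking that in $\pi_0\sh F(\iota_e)$ no edge of $[n]$ other than $e$ becomes identified with the two distinguished output classes. Everything surrounding it — the passage to $\pi_0$, the determination by $0$-simplices, and the exclusion of the $\bf x\leftrightarrow\bf y$ swap — is formal, so I would concentrate the labour on computing $\sh F$ on inert morphisms, most conveniently by realising each $\iota_e$-diagram as a subdiagram of the drawn graphs of \S\ref{TECH}.
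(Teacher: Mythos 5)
Your overall strategy coincides with the paper's: reduce to the value of $\phi$ on $0$-simplices, use the inert $1$-simplices $[n]\rightarrow[1]$ to isolate each pair $\{\bf x_e,\bf y_e\}$, and use the active $1$-simplex $[2]\rightarrow[1]$ to exclude the $\bf x\leftrightarrow\bf y$ swap. The reduction to $0$-simplices and the treatment of general $[n]$ are sound, and the ``isolation property'' you defer is exactly what the paper verifies by drawing the graph of $\sh F([n]\rightarrow[1])$: the selected edge's $\bf x_i,\bf y_i$ are joined to $\bf x,\bf y$ and all other vertices of $\sh F([n])$ are isolated.

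However, your base case is wrong, and not merely in a detail. For the active $\mu:[2]\rightarrow[1]$ the three components of $\sh F(\mu)$ are $\{\bf x,\bf x_{01}\}$, $\{\bf y_{01},\bf x_{12}\}$ and $\{\bf y_{12},\bf y\}$; every one of them contains a vertex of the top row, so the $\sigma_0$-vertex map $\pi_0\sh F([2])\rightarrow\pi_0\sh F(\mu)$ is \emph{surjective} and in particular does not miss $\iota_1(\bf y)$. Your obstruction therefore evaporates, and in fact the transposition cannot be excluded by $\mu$ alone: the diagram admits the reversal symmetry $\bf x\leftrightarrow\bf y$, $\bf x_{01}\leftrightarrow\bf y_{12}$, $\bf y_{01}\leftrightarrow\bf x_{12}$, which is compatible with the $\sigma_1$-vertex inclusion and restricts to the transposition on $\sh F([1])$ --- this is precisely the ``opposite category'' symmetry of the paper's final Aside. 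What kills it is that it permutes the two pairs $\{\bf x_{01},\bf y_{01}\}$ and $\{\bf x_{12},\bf y_{12}\}$ rather than preserving each one, and that pair-preservation is exactly what your \emph{later} inert-map step supplies. So the argument must be reordered, as in the paper: first use the inert maps, without assuming $\phi_{[1]}=\iden$, to show $\phi_{[n]}(e,\bf x)=(e,\phi_{[1]}(\bf x))$ for every edge $e$; then test against $\mu$, where the transposition would force $\phi_\mu$ to carry the class of $\bf x_{01}$ both to that of $\bf y_{01}$ (by $\sigma_0$-naturality) and to that of $\bf y$ (by $\sigma_1$-naturality), which are distinct. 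A smaller point: the identification $B\sh F(\sigma)\simeq\pi_0\sh F(\sigma)$ needs the components of $\sh F(\sigma)$ to be contractible for all $k$-simplices, and ``a colimit of contractible posets along the cells'' is not by itself a reason (gluing intervals along points can produce circles); one must check that the glued graphs are forests.
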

\begin{proof}Fix $\phi:\Aut(\sh F)$. By naturality with respect to the face maps $\Delta^0\rightarrow\Delta^k$, the action of $\phi$ on $\sh F(\sigma)$ must preserve the decomposition of the set of objects as $\coprod_{i:\Delta^k}\sh F(\sigma_i)$. Since by \cite[Lemma 3.2.5]{Hin} each $\sh F(\sigma)$ is simply-laced, if $\phi$ is the identity on each 0-simplex, then it is the identity. Thus, it suffices to show that $\phi$ acts trivially on $\sh F$ of 0-simplices.

\

\noindent \emph{Decomposition into segments}. Each segment $[1]\subseteq \sigma_0$ yields a graph \[\xymatrix{& & \bf x \ar@{-}[d] & \bf y \ar@{-}[d] \\
\cdots & \bf y_{i-1} & \bf x_i & \bf y_i & \bf x_{i+1} & \cdots}\qedhere\]
from which it may be seen that $\phi$ must preserve each pair $\{\bf x_i,\bf y_i\}$, and moreover act on that pair through $\phi\mid\sh F([1])$.

\

\noindent ($\phi\mid\sh F([1])$). It remains to show that $\phi\mid\sh F([1])$ is necessarily trivial. This follows from the diagram
\[\xymatrix{
\bf x_1 & \bf y_1\ar@{-}[r] & \bf x_2 & \bf y_2 \\
& \bf x\ar@{-}[ul] & \bf y\ar@{-}[ur]
}\]
associated by $\sh F$ to the 1-simplex $[2]\rightarrow[1]$ given by the inner face map: no symmetry of this diagram preserves both the decomposition into top and bottom rows and of the top row into two pairs.
\end{proof}

\begin{remark}[Opposite]The formation of opposites as discussed in \cite[\S2.9]{Hin} has the following expression in the variables of the present section:
\begin{align} \sh O_{X^\op} \left[ (u_1^\bf x,u_1^\bf y),\ldots,(u_n^\bf x,u_n^\bf y);\ (v^\bf x,v^\bf y) \right]
 &= X^\op(v^\bf x, u_1^\bf x)\times \prod_{i=1}^{n-1}X^\op(u_i^\bf y,u_{i+1}^\bf x)\times X^\op(u_n^\bf y,v^\bf y) \\
&=  X(v^\bf y,u_n^\bf y)\times \prod_{i=1}^{n-1}X(u_{n-i+1}^\bf x,u_{n-i}^\bf y)\times X(u_1^\bf x,v^\bf x) \\
&= \sh O_X\left[ (u_n^\bf y,u_n^\bf x),\ \ldots,\ (u_1^\bf y,u_1^\bf x);\ (v^\bf y,v^\bf x) \right] \\
&= \sh O^\rm{rev}_X\left[ (u_1^\bf y,u_1^\bf x),\ \ldots,\ (u_n^\bf y,u_n^\bf x);\ (v^\bf y,v^\bf x) \right]
\end{align}
where $\sh O^\rm{rev}$ denotes the reversed operad. In other words, the right Quillen functor $\sh O$ can be made $\Z/2\Z$-equivariant with respect to the action by opposite on $s\Cat$ and reversal on $s\Cat_\mathtt{Ass}$. Correspondingly, the $\infty$-adjunction $\sh F\dashv \mathtt{Ass}$ is also $\Z/2\Z$-equivariant. Since the adjunction has no automorphisms, this equivariance is even unique.\end{remark}

\printbibliography
\end{document}